\documentclass[10pt]{article}

\usepackage[a4paper,margin=30mm]{geometry}
\usepackage{amsmath,amssymb,amsthm,mathtools}
\usepackage{enumitem}
\usepackage{microtype}
\usepackage[hidelinks]{hyperref}
\usepackage{tikz}
\newtheorem{theorem}{Theorem}[section]
\newtheorem{proposition}[theorem]{Proposition}
\newtheorem{lemma}[theorem]{Lemma}

\theoremstyle{definition}
\newtheorem{example}[theorem]{Example}
\newtheorem{definition}[theorem]{Definition}
\theoremstyle{remark}

\newcommand{\G}{\mathcal G}
\newcommand{\V}{\mathcal V}
\newcommand{\E}{\mathcal E}
\newcommand{\RR}{\mathbb R}
\newcommand{\Tor}{\mathcal T}
\newcommand{\ip}[2]{\left\langle #1,#2\right\rangle}

\title{Sharp Bounds on Torsional Rigidity of Metric Graphs
	with Signed \texorpdfstring{\(\delta\)}{delta}-Vertex Strengths}
\author{Sedef \"Ozcan}
\date{}

\begin{document}
	\maketitle
	
	\begin{abstract}
		We study torsional rigidity on finite compact connected metric graphs
		with real \(\delta\)-vertex strengths in the regime of strictly positive
		torsion. For a fixed total length \(L>0\), let \(I_+\) be the sum of
		the positive strengths and \(I_-\) the sum of the magnitudes of the
		negative strengths, with \(I_+>I_-\). We determine the sharp infimum
		and the exact threshold for a finite universal supremum. The supremum
		is finite precisely when \(I_-<I_+/(1+LI_+)\). In this regime, an
		interval with strengths \(I_+\) and \(-I_-\) at opposite endpoints
		attains the sharp Saint--Venant bound. At and beyond the threshold,
		torsional rigidity is unbounded even among three-vertex paths with
		the same data and strictly positive torsion. The sharp infimum is
		\(L^2/(I_+-I_-)\) throughout the range \(I_+>I_-\ge0\). It is not
		attained by any finite graph and is approached by graphs with two
		vertices joined by increasingly many parallel edges of equal length,
		preserving both strength totals separately. A vertex-matrix formulation
		and explicit path examples clarify the relation between strength
		placement, spectral positivity, and solvability.
	\end{abstract}
	
	\noindent\textbf{2020 Mathematics Subject Classification.}
	34B45, 35P15, 05C50, 15B48.
	
	\section{Introduction}\label{sec:introduction}
Metric graphs provide a framework for differential equations on networks:
each edge is an interval with a prescribed length, and vertex conditions
couple the solutions on adjoining edges. Their geometry includes both
the edge lengths and the way the edges are connected; see
\cite{BerkolaikoKuchment2013,Mugnolo2019} for general accounts of this
framework. In this setting, spectral and geometric estimates must also
account for the conditions at the vertices.

In this paper, we consider finite compact connected metric graphs
\(\G=(\V,\E)\) with \(\delta\)-vertex conditions.
These conditions require the edgewise solutions to meet continuously
at the vertices and prescribe a balance of derivatives governed by
real parameters \(\alpha_{\mathrm v}\), called vertex strengths.
We collect the strengths in the vector
\(\alpha=(\alpha_{\mathrm v})_{\mathrm v\in\V}\) and denote the
associated Laplacian by \(-\Delta_{\G,\alpha}\).
Our setting allows both positive and negative strengths.

Torsional rigidity, a quantity originating in elasticity, measures the
total response to constant forcing. For a bounded domain
\(\Omega\subset\RR^n\), the Dirichlet torsional rigidity admits the
variational characterization \cite{Brasco2014}
\[
\Tor(\Omega)
=\max_{0\ne u\in H_0^1(\Omega)}
\frac{\left(\int_\Omega |u(x)|\,dx\right)^2}
{\int_\Omega |\nabla u(x)|^2\,dx}.
\]
The maximum is attained by the torsion function, namely the weak
solution of \(-\Delta u=1\) in \(\Omega\) with zero boundary values.
This formulation connects torsional rigidity to isoperimetric
inequalities and principal eigenvalues. The classical Saint--Venant
inequality identifies the disc as the maximizer among planar domains
of a given area \cite{Polya1948}.

On a metric graph, when \(-\Delta_\alpha\) has a strictly positive first
eigenvalue, the torsion function is
\(\upsilon_\alpha=(-\Delta_\alpha)^{-1}1\), and torsional rigidity is obtained
by integrating this positive function over all edges. An analogous
variational characterization holds in this setting; see
Theorem~\ref{thm:Polya-signed}. For compact metric graphs with Dirichlet conditions at a nonempty
set of vertices and standard Kirchhoff conditions elsewhere,
Mugnolo and Pl\"umer \cite{MugnoloPluemer2023} studied the dependence
of torsional rigidity on graph geometry and its relation to
ground-state energy. The corresponding optimization problems
on graphs ask for the largest and smallest possible torsional
rigidity when the total length and constraints on the vertex
strengths are prescribed.
	For non-negative strengths with total \(I_+>0\), \"Ozcan and T\"aufer
	\cite[Theorem~VIII.2]{OzcanTaufer2026} proved the sharp bound
	\[
	\Tor(\G,\alpha)
	\le \Tor(\mathcal J_L;I_+,0)
	=\frac{L^3}{3}+\frac{L^2}{I_+},
	\]
	where \(L\) is the total length. The maximizing configuration is an
	interval of length \(L\), with all strength concentrated at one endpoint
	and a Neumann condition at the other. 
	
	Negative strengths introduce a further issue: a positive torsion
	function need not exist. Although positive net strength is necessary,
	it is not sufficient, and the placement of the strengths matters.
	We use the discrete framework of \cite[Section~V]{OzcanTaufer2026}
	to describe positivity and torsional rigidity through vertex values.
	A path with three vertices and two edges illustrates how moving a
	negative strength changes the sign of the first eigenvalue while the
	total length and the strength values remain fixed.
	
	Our main results determine the supremum and infimum of torsional rigidity
	under fixed aggregate data. The first extends the Saint--Venant comparison
	to signed strengths and determines exactly when a finite universal upper
	bound is possible.
	Let \(I_-\) be the sum of the magnitudes of the negative strengths, with
	\(I_+\) now the total positive strength; see \eqref{eq:Ipm}. For fixed
	\(L>0\) and \(I_+>I_-\ge0\), consider all finite compact connected
	graphs with these data and strictly positive torsion.
	Theorem~\ref{thm:universal} gives the sharp comparison
	\[
	\Tor(\G,\alpha)\le\Tor(\mathcal J_L;I_+,-I_-)
	\qquad\text{if}\qquad
	I_-<\frac{I_+}{1+LI_+}.
	\]
	Here the model interval carries all positive strength at one endpoint
	and all negative strength at the other. This strict inequality guarantees positive
	torsion for every configuration with these data, and equality in the
	comparison characterizes the model interval up to zero-strength
	subdivisions. At and beyond the threshold, positive torsion can still
	exist, but there is no finite universal upper bound: three-vertex paths
	with the same data have torsional rigidity tending to infinity.
	
	Theorem~\ref{thm:sharp-infimum} determines the sharp infimum in the
	same class. Extending the lower estimate from
	\cite[proof of Theorem~VIII.1]{OzcanTaufer2026} to signed strengths,
	we obtain the infimum \(L^2/(I_+-I_-)\) throughout the range
	\(I_+>I_-\ge0\). This value is not attained by any finite graph.
	It is approached by graphs consisting of two vertices carrying strengths
	\(I_+\) and \(-I_-\), joined by increasingly many parallel edges of equal length
	and fixed total length \(L\). The construction preserves both strength
	totals separately. Thus the lower endpoint depends only on the total
	signed strength.
	
	Sections~\ref{sec:preliminaries}--\ref{sec:vertex} establish the framework,
	positivity criterion, and torsion formulas.
	Section~\ref{sec:detailed-example} examines the path example, and
	Section~\ref{sec:universal} proves the sharp comparison and the
	unboundedness result. Section~\ref{sec:sharp-infimum} determines the
	sharp infimum and proves nonattainment.

	\section{Preliminaries}\label{sec:preliminaries}
	
	Throughout, \(\G=(\V,\E)\) is a finite compact connected metric graph
	with at least one edge:
	each edge \(\mathrm e\in\E\) is an interval
	\([0,\ell_{\mathrm e}]\), with \(\ell_{\mathrm e}>0\), and its endpoints
	are identified according to the vertex set \(\V\). We denote the length vector by
	\(\ell:=(\ell_{\mathrm e})_{\mathrm e\in\E}\).
	Integration uses Lebesgue measure on each edge, and
	\[
	L:=|\G|=\sum_{\mathrm e\in\E}\ell_{\mathrm e},
	\qquad
	\int_\G u\,dx:=\sum_{\mathrm e\in\E}\int_0^{\ell_{\mathrm e}}
	u_{\mathrm e}(x)\,dx,
	\]
	where \(u_{\mathrm e}\) denotes the restriction of \(u\) to an edge.
	All functions are real-valued. We set
	\[
	L^2(\G):=\bigoplus_{\mathrm e\in\E}L^2(0,\ell_{\mathrm e}),
	\qquad
	\|u\|_{L^2(\G)}^2:=\int_\G|u|^2\,dx.
	\]
	The Sobolev space \(H^1(\G)\) consists of continuous functions on
	\(\G\) whose edge restrictions belong to \(H^1(0,\ell_{\mathrm e})\).
	We write \(H^2(\G):=\bigoplus_{\mathrm e\in\E}H^2(0,\ell_{\mathrm e})\)
	for the edgewise \(H^2\) space, without imposing vertex conditions.
	For vertex vectors, \(\ip{g}{k}\) and \(|g|\) denote the Euclidean
	inner product and norm on \(\RR^{\V}\). For a real symmetric matrix,
	\(B\succ0\) and \(B\succeq0\) mean positive definiteness and positive
	semidefiniteness, respectively.
	
	Given a strength vector \(\alpha=(\alpha_{\mathrm v})_{\mathrm v\in\V}
	\in\RR^{\V}\), we define the quadratic form and its associated bilinear
	form by
	\begin{equation}\label{eq:form}
		\begin{aligned}
			h_\alpha[u]
			&:=\sum_{\mathrm e\in\E}\int_{\mathrm e}|u'_{\mathrm e}|^2\,dx
			+\sum_{\mathrm v\in\V}\alpha_{\mathrm v}|u(\mathrm v)|^2,
		\end{aligned}
	\end{equation}
 on the domain \[
	H^1(\G)
	:=\left\{
	u\in C(\G):
	u_{\mathrm e}\in H^1(0,\ell_{\mathrm e})
	\text{ for every }\mathrm e\in\E
	\right\}.
	\]
	The quadratic form is closed and lower bounded. Its associated self-adjoint operator
	\(H_\alpha=-\Delta_{\G,\alpha}\) acts as \(-u''_{\mathrm e}\) on each
	edge. Its domain consists of the functions in
	\(H^1(\G)\cap H^2(\G)\) satisfying
	\begin{equation}\label{eq:delta-condition}
		\sum_{\mathrm e\sim\mathrm v}\partial_\nu u_{\mathrm e}(\mathrm v)
		=\alpha_{\mathrm v}u(\mathrm v),
		\qquad \mathrm v\in\V.
	\end{equation}
	Here \(\partial_\nu\) points away from the vertex into the edge: it is
	\(u'_{\mathrm e}(0)\) at the initial endpoint and
	\(-u'_{\mathrm e}(\ell_{\mathrm e})\) at the terminal endpoint.
 At a vertex with zero
	strength this is the Kirchhoff condition.
	
	The compact embedding \(H^1(\G)\hookrightarrow L^2(\G)\) gives
	\(H_\alpha\) compact resolvent. Its eigenvalues, repeated according to
	multiplicity, are denoted by
	\(\lambda_1(\G,\alpha)\le\lambda_2(\G,\alpha)\le\cdots\), and
	\[
	\lambda_1(\G,\alpha)
	=\min_{0\ne u\in H^1(\G)}
	\frac{h_\alpha[u]}{\|u\|_{L^2(\G)}^2}.
	\]
	The first eigenvalue is simple and has a strictly positive
	eigenfunction \(\varphi\).

	\section{Positivity and the aggregate comparison}\label{sec:positive}
	
	\subsection{The vertex matrix}
	
	For \(g\in\RR^{\V}\), the weighted combinatorial Laplacian
	\(K_\ell\), with conductance \(\ell_{\mathrm e}^{-1}\) on each edge,
	acts as
	\begin{equation}\label{eq:Kell}
		\begin{aligned}
			(K_\ell g)(\mathrm v)
			&=\sum_{\mathrm e=[\mathrm v,\mathrm w]\sim\mathrm v}
			\frac{g(\mathrm v)-g(\mathrm w)}{\ell_{\mathrm e}}.
		\end{aligned}
	\end{equation}
	 Set
	\(
	M_\alpha:=K_\ell+\operatorname{diag}(\alpha).
	\)
	Define the incident-length column vector and its diagonal matrix by
	\begin{equation}\label{eq:dv}
		d_{\mathrm v}:=\sum_{\mathrm e\sim\mathrm v}\ell_{\mathrm e},
		\qquad d:=(d_{\mathrm v})_{\mathrm v\in\V}^{\mathsf T},
		\qquad D_\ell:=\operatorname{diag}(d).
	\end{equation}
	The discrete operator in \cite[Section~V]{OzcanTaufer2026} is
	\(2D_\ell^{-1}M_\alpha\). It is similar to the symmetric matrix
	\(2D_\ell^{-1/2}M_\alpha D_\ell^{-1/2}\), which is positive definite
	exactly when \(M_\alpha\succ0\). Hence the positivity criterion of
	that paper can be expressed equivalently in terms of \(M_\alpha\).
	
	\begin{lemma}\label{lem:decomposition}
		Every \(u\in H^1(\G)\) has a unique decomposition
		\(u=u_0+u_{\mathrm{lin}}\), where \(u_0\) vanishes at every vertex
		and \(u_{\mathrm{lin}}\) is affine on each edge with vertex trace
		\(g=u|_{\V}\). Moreover,
		\begin{equation}\label{eq:form-decomposition}
			h_\alpha[u]
			=\sum_{\mathrm e\in\E}\int_{\mathrm e}|u'_{0,\mathrm e}|^2\,dx
			+\ip{M_\alpha g}{g}.
		\end{equation}
	\end{lemma}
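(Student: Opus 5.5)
I will construct the decomposition edgewise and then compute the form by a one-line orthogonality argument on each edge.

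\emph{Existence and uniqueness.} Given \(u\in H^1(\G)\), set \(g:=u|_{\V}\). This is well defined because \(u\) is continuous on \(\G\), so its value at each vertex is the common endpoint value of all incident edges. On an edge \(\mathrm e=[\mathrm v,\mathrm w]\), parametrized by \([0,\ell_{\mathrm e}]\), define
\[
u_{\mathrm{lin},\mathrm e}(x):=g(\mathrm v)+\frac{x}{\ell_{\mathrm e}}\bigl(g(\mathrm w)-g(\mathrm v)\bigr).
\]
The resulting \(u_{\mathrm{lin}}\) takes the value \(g(\mathrm v)\) at every vertex from every incident edge. Hence it is continuous and lies in \(H^1(\G)\). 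Set \(u_0:=u-u_{\mathrm{lin}}\in H^1(\G)\); it vanishes at every vertex. For uniqueness, suppose \(u_0+u_{\mathrm{lin}}=\tilde u_0+\tilde u_{\mathrm{lin}}\). The affine difference \(u_{\mathrm{lin}}-\tilde u_{\mathrm{lin}}\) equals \(\tilde u_0-u_0\), which vanishes at both endpoints of every edge. An affine function vanishing at both endpoints of an interval is zero, so the two decompositions coincide.

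\emph{Form identity.} The vertex term of \(h_\alpha[u]\) is \(\sum_{\mathrm v}\alpha_{\mathrm v}g(\mathrm v)^2=\ip{\operatorname{diag}(\alpha)g}{g}\), since \(u(\mathrm v)=g(\mathrm v)\). For the kinetic term, fix an edge. The derivative \(u'_{\mathrm{lin},\mathrm e}\equiv c_{\mathrm e}:=(g(\mathrm w)-g(\mathrm v))/\ell_{\mathrm e}\) is constant, so
\[
\int_0^{\ell_{\mathrm e}}u'_{0,\mathrm e}\,c_{\mathrm e}\,dx=c_{\mathrm e}\bigl(u_{0,\mathrm e}(\ell_{\mathrm e})-u_{0,\mathrm e}(0)\bigr)=0 .
\]
This step uses the fundamental theorem of calculus, valid for \(H^1\) functions, which are absolutely continuous. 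Therefore
\[
\int_{\mathrm e}|u'_{\mathrm e}|^2\,dx=\int_{\mathrm e}|u'_{0,\mathrm e}|^2\,dx+\frac{(g(\mathrm v)-g(\mathrm w))^2}{\ell_{\mathrm e}}.
\]

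It remains to check that
\[
\sum_{\mathrm e=[\mathrm v,\mathrm w]}\frac{(g(\mathrm v)-g(\mathrm w))^2}{\ell_{\mathrm e}}=\ip{K_\ell g}{g}.
\]
This follows by expanding \(\ip{K_\ell g}{g}=\sum_{\mathrm v}g(\mathrm v)\sum_{\mathrm e\sim\mathrm v}(g(\mathrm v)-g(\mathrm w))/\ell_{\mathrm e}\) and grouping the two contributions of each edge, one from each endpoint, into \((g(\mathrm v)-g(\mathrm w))^2/\ell_{\mathrm e}\). Summing over edges and adding the vertex term gives \eqref{eq:form-decomposition}.

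The only point needing care is this bookkeeping when \(\E\) contains loops or multiple edges. A loop at \(\mathrm v\) contributes \(0\) on both sides, because \(g(\mathrm v)-g(\mathrm v)=0\) in \eqref{eq:Kell}, whatever convention is used for counting it twice. Parallel edges are simply summed separately. With that convention fixed, the remaining steps are routine.
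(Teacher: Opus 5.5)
Your proof is correct and follows the paper's argument. You interpolate the vertex trace affinely on each edge, note that the cross term vanishes because \(u_0\) is zero at both endpoints while \(u'_{\mathrm{lin},\mathrm e}\) is constant, and sum over the edges. Your explicit check that \(\sum_{\mathrm e}(g(\mathrm v)-g(\mathrm w))^2/\ell_{\mathrm e}=\ip{K_\ell g}{g}\), including the bookkeeping for loops, only spells out a step the paper leaves implicit.
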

	
	\begin{proof}
		On an edge \(\mathrm e=[\mathrm v,\mathrm w]\), identified with
		\([0,\ell_{\mathrm e}]\), set
		\[
		u_{\mathrm{lin},\mathrm e}(x)
		=g(\mathrm v)+\frac{g(\mathrm w)-g(\mathrm v)}{\ell_{\mathrm e}}x,
		\qquad u_{0,\mathrm e}:=u_{\mathrm e}-u_{\mathrm{lin},\mathrm e}.
		\]
		The affine interpolant is uniquely determined by its endpoint values,
		so these definitions give the claimed decomposition, including on
		loops. Since \(u_{0,\mathrm e}\) vanishes at both endpoints and
		\(u'_{\mathrm{lin},\mathrm e}\) is constant, their derivative cross
		term vanishes. Hence
		\[
		\int_{\mathrm e}|u'_{\mathrm e}|^2\,dx
		=\int_{\mathrm e}|u'_{0,\mathrm e}|^2\,dx
		+\frac{|g(\mathrm v)-g(\mathrm w)|^2}{\ell_{\mathrm e}}.
		\]
		Summing and adding the vertex terms in \eqref{eq:form} gives
		\eqref{eq:form-decomposition}.
	\end{proof}
	
	\begin{proposition}\label{prop:metric-matrix}
		For every \(\alpha\in\RR^{\V}\),
		\begin{equation}\label{eq:metric-matrix-positive}
			\lambda_1(\G,\alpha)>0
			\quad\Longleftrightarrow\quad M_\alpha\succ0.
		\end{equation}
	\end{proposition}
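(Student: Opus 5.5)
We prove both implications from the decomposition of Lemma~\ref{lem:decomposition} together with the variational characterization of \(\lambda_1(\G,\alpha)\). Since the form domain is \(H^1(\G)\), \(\lambda_1(\G,\alpha)>0\) is equivalent to \(h_\alpha[u]>0\) for every \(u\ne0\). Here we use that the minimum of the Rayleigh quotient is attained, so \(\lambda_1>0\) implies strict positivity of the form, and conversely.

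For \((\Rightarrow)\), take \(0\ne g\in\RR^{\V}\) and let \(u=u_{\mathrm{lin}}\) be its edgewise affine interpolant, so that \(u_0=0\). This \(u\) is continuous, hence lies in \(H^1(\G)\), and it is nonzero because its vertex trace is \(g\ne0\). By \eqref{eq:form-decomposition},
\[
\ip{M_\alpha g}{g}=h_\alpha[u]\ge\lambda_1(\G,\alpha)\|u\|_{L^2(\G)}^2>0 .
\]
Hence \(M_\alpha\succ0\).

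For \((\Leftarrow)\), assume \(M_\alpha\succ0\) and let \(0\ne u\in H^1(\G)\) with decomposition \(u=u_0+u_{\mathrm{lin}}\) and trace \(g\). We split into two cases.
\begin{enumerate}[label=(\roman*)]
\item If \(g\ne0\), then \(\ip{M_\alpha g}{g}>0\). The edge term in \eqref{eq:form-decomposition} is nonnegative, so \(h_\alpha[u]>0\).
\item If \(g=0\), then \(u=u_0\ne0\) vanishes at both endpoints of each edge. It therefore has \(u_{0,\mathrm e}'\ne0\) on some edge, since a function with zero derivative and zero endpoint values would vanish there. Thus \(h_\alpha[u]=\sum_{\mathrm e}\int_{\mathrm e}|u'_{0,\mathrm e}|^2\,dx>0\).
\end{enumerate}
So \(h_\alpha[u]>0\) for every \(u\ne0\). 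Applying this to the minimizer, namely the first eigenfunction \(\varphi\), gives \(\lambda_1(\G,\alpha)=h_\alpha[\varphi]/\|\varphi\|^2>0\).

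The only delicate point is that strict positivity of the form for each \(u\) must yield a strictly positive infimum of the Rayleigh quotient. This is settled by attainment of the minimum, which follows from the compact resolvent already noted in the preliminaries. Alternatively, a quantitative bound can be obtained as follows. On each edge, the Dirichlet Poincaré inequality gives \(\int|u_0'|^2\ge(\pi/\ell_{\mathrm e})^2\int|u_0|^2\). For the affine part, \(\|u_{\mathrm{lin}}\|^2\le C|g|^2\le C\mu_{\min}(M_\alpha)^{-1}\ip{M_\alpha g}{g}\). Combining these via \(\|u\|^2\le2\|u_0\|^2+2\|u_{\mathrm{lin}}\|^2\) gives \(h_\alpha[u]\ge c\|u\|^2\) with an explicit \(c>0\). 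I would present the attainment argument and mention the quantitative bound only as a remark.
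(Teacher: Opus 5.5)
Your proof is correct and follows the paper's argument. For $(\Rightarrow)$ you test the form on the affine interpolant $u_{\mathrm{lin}}$ of a nonzero vertex vector; for $(\Leftarrow)$ you use \eqref{eq:form-decomposition} to show $h_\alpha[u]>0$ for every $u\ne0$, and then attainment of the Rayleigh minimum at the first eigenfunction gives $\lambda_1(\G,\alpha)>0$. Your case split $g\ne0$ versus $g=0$ only spells out the paper's remark that both terms vanish together only when $g=0$ and $u_0=0$, and the quantitative Poincar\'e bound is a valid but unnecessary extra.
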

	
	\begin{proof}
		If \(M_\alpha\succ0\), the two terms on the right-hand side of
		\eqref{eq:form-decomposition} are non-negative and can vanish together
		only when \(g=0\) and \(u_0=0\). Thus \(h_\alpha[u]>0\) for every
		nonzero \(u\in H^1(\G)\). Since the Rayleigh minimum is attained
		at a nonzero first eigenfunction,
		\(\lambda_1(\G,\alpha)>0\).
		Conversely, if the first eigenvalue is positive, the affine extension
		\(u_{\mathrm{lin}}\) of any nonzero vertex vector \(g\) satisfies
		\[
		\ip{M_\alpha g}{g}
		=h_\alpha[u_{\mathrm{lin}}]
		\ge\lambda_1(\G,\alpha)\|u_{\mathrm{lin}}\|_{L^2(\G)}^2>0.
		\]
		This proves \(M_\alpha\succ0\).
	\end{proof}
	
 Testing \(M_\alpha\) on the all-ones
	vertex vector \(\mathbf1:=(1)_{\mathrm v\in\V}^{\mathsf T}\) also
	shows that positive first eigenvalue requires
	\(\sum_{\mathrm v\in\V}\alpha_{\mathrm v}>0\).
	
	\subsection{The aggregate comparison}
	
	Define the positive and negative strength totals by
	\begin{equation}\label{eq:Ipm}
		I_+(\alpha):=\sum_{\substack{\mathrm v\in\V\\\alpha_{\mathrm v}>0}}
		\alpha_{\mathrm v},
		\qquad
		I_-(\alpha):=\sum_{\substack{\mathrm v\in\V\\\alpha_{\mathrm v}<0}}
		|\alpha_{\mathrm v}|.
	\end{equation}
	We write \(I_+\) and \(I_-\) when the strength vector is clear.
	Together with \(L\), these are the aggregate data, and the total
	strength is \(I_+-I_-\).
	
	For \(a>0\), let \(\mathcal J_a=[0,a]\) have endpoint strengths
	\(I_+\) and \(-I_-\). Its quadratic form is
	\begin{equation}\label{eq:model-form}
		h_{\mathcal J_a,I_+,I_-}[f]
		:=\int_0^a|f'(x)|^2\,dx+I_+|f(0)|^2-I_-|f(a)|^2,
		\qquad f\in H^1(0,a).
	\end{equation}
The comparison theorem of
	Karreskog, Kurasov, and Trygg Kupersmidt
	\cite[Theorem~1]{KKT2016} gives
	\begin{equation}\label{eq:KKT-bound}
		\lambda_1(\G,\alpha)
		\ge\lambda_1(\mathcal J_L;I_+,-I_-).
	\end{equation}
	The vertex matrix of \(\mathcal J_L\) is
	\begin{equation}\label{eq:model-matrix}
		M_{\mathcal J}
		=\begin{pmatrix}
			L^{-1}+I_+&-L^{-1}\\
			-L^{-1}&L^{-1}-I_-
		\end{pmatrix},
	\end{equation}
	with determinant
	\begin{equation}\label{eq:model-determinant}
		\det M_{\mathcal J}
		=\frac{I_+-I_--LI_+I_-}{L}.
	\end{equation}
	Since its first diagonal entry is positive, Sylvester's criterion
	yields
	\begin{equation}\label{eq:aggregate-threshold}
		M_{\mathcal J}\succ0
		\quad\Longleftrightarrow\quad
		I_-<\frac{I_+}{1+LI_+}.
	\end{equation}
	By \eqref{eq:KKT-bound} and Proposition~\ref{prop:metric-matrix}, this
	strict aggregate inequality guarantees \(M_\alpha\succ0\) for every
	graph with those aggregate data.
	
	If \(u\) is any weak solution of \(H_\alpha u=1\), testing against
the first positive eigenfunction	\(\varphi\) gives
	\[
	(1,\varphi)_{L^2(\G)}
	=\lambda_1(\G,\alpha)(u,\varphi)_{L^2(\G)}.
	\]
	If \(\lambda_1(\G,\alpha)=0\), the right-hand side vanishes,
	whereas
	\((1,\varphi)_{L^2(\G)}=\int_\G\varphi\,dx>0\)
	because \(\varphi\) is strictly positive; hence no weak solution
	can exist. Moreover, if \(u\) is strictly positive, then
	\((u,\varphi)_{L^2(\G)}>0\), and the same identity implies
	\(\lambda_1(\G,\alpha)>0\).
	A negative first eigenvalue alone does not decide solvability, since
	zero may still occur among the higher eigenvalues.

	By \cite[Theorem~V.3]{OzcanTaufer2026}, strict positivity of the
	spectrum of \(2D_\ell^{-1}M_\alpha\) is equivalent to existence and
	uniqueness of a strictly positive solution of
	\(H_\alpha\upsilon_\alpha=1\). The similarity above and
	Proposition~\ref{prop:metric-matrix} therefore identify this regime
	exactly with \(M_\alpha\succ0\) and
	\(\lambda_1(\G,\alpha)>0\). We use the following definition
	throughout this regime.
	
	\begin{definition}\label{def:torsion}
		Let \(\alpha\in\RR^{\V}\) satisfy \(M_\alpha\succ0\).
		The \emph{torsion function} associated with \((\G,\alpha)\) is the
		unique function \(\upsilon_\alpha\in H^1(\G)\) such that
		\[
	H_\alpha \upsilon_\alpha=1.
		\]
	The \emph{torsional rigidity} is
		\begin{equation}\label{eq:T-definition}
			\Tor(\G,\alpha):=\int_\G\upsilon_\alpha(x)\,dx.
		\end{equation}
	\end{definition}

	In particular, this definition applies whenever the strict inequality
	in \eqref{eq:aggregate-threshold} holds.
	
	\section{Torsional rigidity and the vertex formula}\label{sec:vertex}
	
	We first record the variational characterization in the positive-torsion
	regime of Definition~\ref{def:torsion}.
	
	\begin{theorem}
		\label{thm:Polya-signed}
		Assume that \(\lambda_1(\G,\alpha)>0\), or equivalently,
		\(M_\alpha\succ0\). Then
		\begin{equation}\label{eq:Polya-signed}
			\Tor(\G,\alpha)
			=\max_{0\ne u\in H^1(\G)}
			\frac{\left(\int_\G |u(x)|\,dx\right)^2}{h_\alpha[u]}.
		\end{equation}
		The maximizers are precisely \(u=c\,\upsilon_\alpha\),
		where \(c\in\RR\setminus\{0\}\).
	\end{theorem}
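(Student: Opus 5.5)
We prove the Pólya-type characterization \eqref{eq:Polya-signed} by a Cauchy--Schwarz argument in the energy inner product defined by \(h_\alpha\). The key input is that this inner product is positive definite when \(\lambda_1(\G,\alpha)>0\).

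\textbf{Step 1: the energy inner product.} Let \(h_\alpha(u,w)\) be the symmetric bilinear form associated with \eqref{eq:form}. Since \(\lambda_1(\G,\alpha)>0\), the Rayleigh characterization gives
\[
h_\alpha[u]\ge\lambda_1(\G,\alpha)\|u\|_{L^2(\G)}^2>0
\qquad\text{for } u\ne0,
\]
so \(h_\alpha\) is an inner product on \(H^1(\G)\). In particular the denominator in \eqref{eq:Polya-signed} is strictly positive and the quotient is well defined.

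\textbf{Step 2: weak form of the torsion equation.} By Definition~\ref{def:torsion}, \(\upsilon_\alpha\) lies in the operator domain and satisfies \(H_\alpha\upsilon_\alpha=1\). By the definition of the operator associated with a closed form,
\[
h_\alpha(\upsilon_\alpha,w)=\int_\G w\,dx
\qquad\text{for every } w\in H^1(\G).
\]
Taking \(w=\upsilon_\alpha\) gives \(h_\alpha[\upsilon_\alpha]=\int_\G\upsilon_\alpha\,dx=\Tor(\G,\alpha)\). By \cite[Theorem~V.3]{OzcanTaufer2026}, as recalled before Definition~\ref{def:torsion}, \(\upsilon_\alpha\) is strictly positive, so \(|\upsilon_\alpha|=\upsilon_\alpha\). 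Hence the quotient at \(u=\upsilon_\alpha\) equals
\[
\frac{\Tor(\G,\alpha)^2}{\Tor(\G,\alpha)}=\Tor(\G,\alpha),
\]
where \(\Tor(\G,\alpha)>0\). The same value is obtained for \(u=c\,\upsilon_\alpha\), since the quotient is homogeneous of degree zero.

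\textbf{Step 3: the upper bound.} Let \(u\ne0\). We must bound \(\int_\G|u|\,dx\), not \(\int_\G u\,dx\), so we replace \(u\) by \(|u|\). We have \(|u|\in H^1(\G)\), and it is continuous at the vertices. Moreover \(\bigl||u|'\bigr|=|u'|\) a.e. and \(|u|(\mathrm v)^2=u(\mathrm v)^2\), so \(h_\alpha[|u|]=h_\alpha[u]\). This holds for signed \(\alpha\) because the vertex terms depend only on \(|u(\mathrm v)|^2\). Then Step 2 and Cauchy--Schwarz in the inner product \(h_\alpha\) give
\[
\int_\G|u|\,dx=h_\alpha(\upsilon_\alpha,|u|)\le h_\alpha[\upsilon_\alpha]^{1/2}\,h_\alpha[u]^{1/2}.
\]
Squaring and dividing by \(h_\alpha[u]\) yields a quotient at most \(\Tor(\G,\alpha)\). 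Combined with Step 2, the maximum in \eqref{eq:Polya-signed} is attained and equals \(\Tor(\G,\alpha)\).

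\textbf{Step 4: characterizing the maximizers.} This is the step that needs the most care, since it requires two equality cases. Suppose \(u\) attains the maximum. Then equality holds in Cauchy--Schwarz, and because \(h_\alpha\) is an inner product, \(|u|=c\,\upsilon_\alpha\) for some \(c>0\). Since \(\upsilon_\alpha>0\) everywhere, \(|u|\) vanishes nowhere. A continuous function on the connected graph \(\G\) that never vanishes has constant sign, so \(u=\pm c\,\upsilon_\alpha\). Conversely, every nonzero multiple of \(\upsilon_\alpha\) attains the maximum by Step 2.
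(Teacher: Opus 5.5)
Your proof is correct and complete. The paper gives no argument of its own here. It only remarks that the proof of \cite[Theorem~III.1]{OzcanTaufer2026} carries over to signed strengths once \(\lambda_1(\G,\alpha)>0\).

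Your Cauchy--Schwarz argument in the positive definite form \(h_\alpha\) is a self-contained version of that transfer. It combines the identity \(h_\alpha[|u|]=h_\alpha[u]\), valid for any signs of \(\alpha\), with the connectedness argument for the sign of a maximizer. The only external input is the strict positivity of \(\upsilon_\alpha\), and you genuinely need it only to show that every maximizer is a multiple of \(\upsilon_\alpha\).
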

	
	\begin{proof}
	The characterization and its equality case follow from the argument
	of \cite[Theorem~III.1]{OzcanTaufer2026}, which also applies to
	signed strengths under the assumption \(\lambda_1(\G,\alpha)>0\).
	\end{proof}
	
	For a positive torsion function, define its vertex column vector by
	\begin{equation}\label{eq:torsion-vertex-vector}
		g_\alpha:=
		\bigl(\upsilon_\alpha(\mathrm v)\bigr)_{\mathrm v\in\V}^{\mathsf T}.
	\end{equation}
	The incident-length vector \(d\) is defined in \eqref{eq:dv}.
	Solving the edge equations and imposing the vertex conditions yields
	the reduction from \cite[Proposition~V.2]{OzcanTaufer2026}, together
	with the following formula.
	
	\begin{proposition}
		\label{prop:T-matrix}
		If \(M_\alpha\succ0\), then
		\begin{equation}\label{eq:Mg-d}
			M_\alpha g_\alpha=\frac12d,
		\end{equation}
		and
		\begin{equation}\label{eq:T-matrix}
			\Tor(\G,\alpha)
			=\frac1{12}\sum_{\mathrm e\in\E}\ell_{\mathrm e}^3
			+\frac14\ip{d}{M_\alpha^{-1}d}.
		\end{equation}
	\end{proposition}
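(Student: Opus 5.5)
The plan is to solve the torsion equation explicitly edge by edge, impose the vertex conditions to obtain \eqref{eq:Mg-d}, and then integrate the explicit solution to obtain \eqref{eq:T-matrix}. By Definition~\ref{def:torsion}, \(\upsilon_\alpha\) satisfies \(-\upsilon_{\alpha,\mathrm e}''=1\) on each edge. On an edge \(\mathrm e=[\mathrm v,\mathrm w]\cong[0,\ell_{\mathrm e}]\), \(\upsilon_{\alpha,\mathrm e}\) is therefore the affine interpolant of the vertex values \(g_\alpha(\mathrm v),g_\alpha(\mathrm w)\) plus the bubble
\[
b_{\mathrm e}(x)=\tfrac12x(\ell_{\mathrm e}-x).
\]
This is exactly the decomposition of Lemma~\ref{lem:decomposition}, with \(u_0=b\). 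Uniqueness follows because \(M_\alpha\succ0\) forces \(\lambda_1>0\) by Proposition~\ref{prop:metric-matrix}.

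\textbf{Vertex equation.} Compute the outward derivative at \(\mathrm v\). The affine part contributes \((g(\mathrm w)-g(\mathrm v))/\ell_{\mathrm e}\). The bubble contributes \(b_{\mathrm e}'(0)=\ell_{\mathrm e}/2\) at the initial endpoint and \(-b_{\mathrm e}'(\ell_{\mathrm e})=\ell_{\mathrm e}/2\) at the terminal endpoint. Summing over \(\mathrm e\sim\mathrm v\) and inserting into \eqref{eq:delta-condition} gives
\[
-(K_\ell g)(\mathrm v)+\tfrac12 d_{\mathrm v}=\alpha_{\mathrm v}g(\mathrm v),
\]
which is \(M_\alpha g_\alpha=\tfrac12 d\). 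The only care needed is with loops: a loop at \(\mathrm v\) is counted twice in the sum, contributing \(0\) to \(K_\ell\) and \(\ell_{\mathrm e}/2\) twice. The definition of \(d_{\mathrm v}\) in \eqref{eq:dv} must be read with that multiplicity, counting a loop's length twice, and I will state this convention explicitly. Since \(M_\alpha\) is invertible, \(g_\alpha=\tfrac12M_\alpha^{-1}d\).

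\textbf{Integration.} Integrate edgewise. The bubble gives \(\int_0^{\ell}\tfrac12x(\ell-x)\,dx=\ell^3/12\). The affine part gives \(\tfrac{\ell_{\mathrm e}}2\bigl(g(\mathrm v)+g(\mathrm w)\bigr)\). Summing over edges, each vertex collects \(\tfrac12 d_{\mathrm v}g(\mathrm v)\), again with loops counted twice. Hence
\[
\Tor(\G,\alpha)=\frac1{12}\sum_{\mathrm e\in\E}\ell_{\mathrm e}^3+\frac12\ip{d}{g_\alpha}=\frac1{12}\sum_{\mathrm e\in\E}\ell_{\mathrm e}^3+\frac14\ip{d}{M_\alpha^{-1}d}.
\]

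None of these steps is deep. The main point to get right is the bookkeeping of orientations and loops, so that the same vector \(d\) appears in both the derivative balance and the integral. Everything else is direct computation.
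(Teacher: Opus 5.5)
Your proposal is correct and matches the paper's proof step for step. You use the explicit edge profile (affine interpolant plus the bubble $\tfrac12 s(\ell_{\mathrm e}-s)$), sum the outward derivatives at each vertex to obtain $M_\alpha g_\alpha=\tfrac12 d$, and integrate edgewise to get $\frac1{12}\sum_{\mathrm e}\ell_{\mathrm e}^3+\frac12\ip{d}{g_\alpha}$ before substituting. Your explicit convention that a loop contributes its length twice to $d_{\mathrm v}$ is the incidence counting the paper uses implicitly, as in the identity $\ip{d}{\mathbf1}=2L$ in the proof of Theorem~\ref{thm:sharp-infimum}.
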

	
	\begin{proof}
		Let \(\mathrm e=[\mathrm v,\mathrm w]\) have length
		\(\ell_{\mathrm e}\), with coordinate \(s\in[0,\ell_{\mathrm e}]\)
		starting at \(\mathrm v\). 
		The equation \(-\upsilon_{\alpha,\mathrm e}''=1\) and the endpoint
		values give
		\begin{equation}\label{eq:torsion-edge-profile}
			\upsilon_{\alpha,\mathrm e}(s)
			=\frac{s(\ell_{\mathrm e}-s)}2
			+\left(1-\frac{s}{\ell_{\mathrm e}}\right)g_\alpha(\mathrm v)
			+\frac{s}{\ell_{\mathrm e}}g_\alpha(\mathrm w).
		\end{equation}
		The derivatives directed away from the endpoints are
		\[
		\partial_\nu\upsilon_{\alpha,\mathrm e}(\mathrm v)
		=\frac{\ell_{\mathrm e}}2
		+\frac{g_\alpha(\mathrm w)-g_\alpha(\mathrm v)}{\ell_{\mathrm e}},
		\qquad
		\partial_\nu\upsilon_{\alpha,\mathrm e}(\mathrm w)
		=\frac{\ell_{\mathrm e}}2
		+\frac{g_\alpha(\mathrm v)-g_\alpha(\mathrm w)}{\ell_{\mathrm e}}.
		\]
		Summing at \(\mathrm v\), the vertex condition
		\eqref{eq:delta-condition} becomes
		\(d_{\mathrm v}/2-(K_\ell g_\alpha)(\mathrm v)
		=\alpha_{\mathrm v}g_\alpha(\mathrm v)\).
		This is \eqref{eq:Mg-d}, and hence
		\(g_\alpha=\tfrac12M_\alpha^{-1}d\).
		
		Integrating \eqref{eq:torsion-edge-profile} gives
		\[
		\int_{\mathrm e}\upsilon_{\alpha,\mathrm e}(s)\,ds
		=\frac{\ell_{\mathrm e}^3}{12}
		+\frac{\ell_{\mathrm e}}2
		\bigl(g_\alpha(\mathrm v)+g_\alpha(\mathrm w)\bigr).
		\]
		Each endpoint incidence is counted once when these expressions are
		summed, so
		\begin{equation}\label{eq:T-vertex-trace}
			\Tor(\G,\alpha)
			=\frac1{12}\sum_{\mathrm e\in\E}\ell_{\mathrm e}^3
			+\frac12\ip{d}{g_\alpha}.
		\end{equation}
		Substitution of the vertex vector obtained above proves
		\eqref{eq:T-matrix}.
	\end{proof}
	
	\section{A detailed example}\label{sec:detailed-example}
	
	This section examines what the strength values alone can tell us about
	the existence of a strictly positive torsion function, and how their
	placement affects this question. We use a path with three vertices and
	two edges, on which the total length and the strength values can be
	kept fixed while the negative strength changes position. By determining
	the spectral sign and solving the torsion equation explicitly, we
	clarify the scope of the aggregate comparison and distinguish
	solvability from strict positivity.
	
	\subsection{The graph and its vertex conditions}
	
	Fix \(L>0\), \(I_+>0\), and \(I_-\ge0\). For \(0<x<L\), let
	\(
	\G_x=(\V_x,\E_x),
	\V_x=\{\mathrm v_0,\mathrm v_x,\mathrm v_L\},
	\E_x=\{\mathrm e_1,\mathrm e_2\}.
	\)
	The edge \(\mathrm e_1\) joins \(\mathrm v_0\) to \(\mathrm v_x\)
	and has length \(x\), while \(\mathrm e_2\) joins
	\(\mathrm v_x\) to \(\mathrm v_L\) and has length \(L-x\).
	In the vertex order \((\mathrm v_0,\mathrm v_x,\mathrm v_L)\), assign
	\(
	\alpha_x=(I_+,-I_-,0)^{\mathsf T}.
	\)
	
	\begin{figure}[htbp]
		\centering
		\begin{tikzpicture}[
			vertex/.style={circle,fill=black,
				inner sep=0pt,minimum size=4pt}
			]
			\coordinate (v0) at (0,0);
			\coordinate (vx) at (3.5,0);
			\coordinate (vL) at (8,0);
			\draw[line width=0.8pt] (v0) --
			node[below=9pt] {$\ell_{\mathrm e_1}=x$} (vx);
			\draw[line width=0.8pt] (vx) --
			node[below=9pt] {$\ell_{\mathrm e_2}=L-x$} (vL);
			\node[vertex,label=above:{$I_+$},
			label=below:{$\mathrm v_0$}] at (v0) {};
			\node[vertex,label=above:{$-I_-$},
			label=below:{$\mathrm v_x$}] at (vx) {};
			\node[vertex,label=above:{$0$},
			label=below:{$\mathrm v_L$}] at (vL) {};
		\end{tikzpicture}
		\caption{The metric path graph \(\G_x\)}
		\label{fig:aggregate-path}
	\end{figure}
	
	Varying \(x\) preserves the total length \(|\G_x|=L\), the
	combinatorial graph, and the entire ordered strength vector. In
	particular, its aggregate strengths are always \(I_+\) and \(I_-\).
	We use a global coordinate \(s\in[0,L]\), identifying the two edges
	with \([0,x]\) and \([x,L]\). Thus \(H^1(\G_x)\) is identified
	with \(H^1(0,L)\), while the vertex \(\mathrm v_x\) retains its
	specified strength. The vertex conditions for \(H_{\alpha_x}\) are
	\begin{equation}\label{eq:detailed-path-conditions}
		\begin{gathered}
			u'(0)=I_+u(0),\qquad u'(L)=0,\\
			u(x-)=u(x+),\qquad
			u'(x+)-u'(x-)=-I_-u(x).
		\end{gathered}
	\end{equation}
	The vertex matrix and incident-length vector in this order are
	\begin{equation}\label{eq:path-matrix}
		M_{\alpha_x}=
		\begin{pmatrix}
			I_++x^{-1}&-x^{-1}&0\\
			-x^{-1}&x^{-1}+(L-x)^{-1}-I_-&-(L-x)^{-1}\\
			0&-(L-x)^{-1}&(L-x)^{-1}
		\end{pmatrix},
	\end{equation}
	\begin{equation}\label{eq:path-d}
		d_x=(x,L,L-x)^{\mathsf T}.
	\end{equation}
	Indeed, the two reciprocal edge lengths contribute the corresponding
	diagonal and off-diagonal entries in \eqref{eq:path-matrix}, and the
	strengths contribute its remaining diagonal terms. At the middle
	vertex, the two incident lengths sum to \(L\), giving the middle
	entry of \eqref{eq:path-d}.
	
	\subsection{The sign of the first eigenvalue}
	
	For a vertex trace \(g=(a,b,c)^{\mathsf T}\), the matrix energy is
	\[
	\ip{M_{\alpha_x}g}{g}
	=\frac{(a-b)^2}{x}+\frac{(b-c)^2}{L-x}+I_+a^2-I_-b^2.
	\]
	Set
	\begin{equation}\label{eq:detailed-path-D}
		D_x:=I_+-I_--xI_+I_-.
	\end{equation}
	Completing the square gives
	\begin{equation}\label{eq:detailed-path-squares}
		\ip{M_{\alpha_x}g}{g}
		=\left(I_++\frac1x\right)
		\left(a-\frac{b}{1+xI_+}\right)^2
		+\frac{D_x}{1+xI_+}b^2
		+\frac{(b-c)^2}{L-x}.
	\end{equation}
	All denominators are positive. If \(D_x>0\), this expression is
	positive for every nonzero \(g\), and
	Proposition~\ref{prop:metric-matrix} therefore gives
	\(\lambda_1(\G_x,\alpha_x)>0\).
	
	For the other two signs, consider the positive edgewise affine function
	\begin{equation}\label{eq:detailed-path-kernel}
		\varphi_{0,x}(s)=1+I_+\min\{s,x\}.
	\end{equation}
	Its vertex trace is \((1,1+xI_+,1+xI_+)^{\mathsf T}\), so the
	first and last squares in \eqref{eq:detailed-path-squares} vanish.
	By the affine--Dirichlet decomposition \eqref{eq:form-decomposition},
	\(
	h_{\alpha_x}[\varphi_{0,x}]=D_x(1+xI_+).
	\)
	If \(D_x=0\), the same decomposition and
	\eqref{eq:detailed-path-squares} show that the full form is
	nonnegative, and \(\varphi_{0,x}\) has zero Rayleigh quotient.
	Hence \(\lambda_1(\G_x,\alpha_x)=0\). If \(D_x<0\), this
	test function has negative Rayleigh quotient. Thus
	\begin{equation}\label{eq:aggregate-path-sign}
		\operatorname{sgn}\lambda_1(\G_x,\alpha_x)
		=\operatorname{sgn}D_x.
	\end{equation}
	The length \(L-x\) does not appear in this criterion because the
	second edge can carry a constant function: its derivative energy
	then vanishes, and its terminal vertex has strength zero.
	
	\subsection{Solving the torsion equation}
	
	We solve \(H_{\alpha_x}u=1\) without initially imposing positivity.
	This means solving \(-u''=1\) on each edge together with
	\eqref{eq:detailed-path-conditions}. On the first edge, write
	\(u(s)=-s^2/2+As+C\); the condition at zero gives \(A=I_+C\).
	The condition at \(L\) gives \(u(s)=-s^2/2+Ls+B\) on the second
	edge, and continuity at \(x\) gives \(B=(1+xI_+)C-Lx\).
	Finally, the derivative jump becomes
	\[
	L-I_+C=-I_-\left(-\frac{x^2}{2}+(1+xI_+)C\right),
	\]
	or equivalently,
	\begin{equation}\label{eq:detailed-path-coefficient}
		D_xC=L-\frac{I_-x^2}{2}.
	\end{equation}
	For \(D_x\ne0\), the unique solution is therefore
	\begin{equation}\label{eq:detailed-path-solution}
		\begin{gathered}
			c_x:=\frac{L-I_-x^2/2}{D_x},\\[1mm]
			u_x(s)=
			\begin{cases}
				-\dfrac{s^2}{2}+I_+c_xs+c_x,
				&0\le s\le x,\\[2mm]
				-\dfrac{s^2}{2}+Ls+(1+xI_+)c_x-Lx,
				&x\le s\le L.
			\end{cases}
		\end{gathered}
	\end{equation}
	Uniqueness follows by applying the same calculation to the homogeneous
	equation: its constant \(C\) satisfies \(D_xC=0\), which forces
	all its coefficients to vanish. This argument applies when
	\(D_x<0\).
	
	If \(D_x>0\), then \(xI_-<1\), so the numerator of \(c_x\) is
	greater than \(L-x/2>0\). Moreover,
	\[
	I_+c_x-x
	=\frac{I_+(L-x)+I_-x+I_+I_-x^2/2}{D_x}>0.
	\]
	Hence \(u_x'(s)=I_+c_x-s>0\) on the first edge and
	\(u_x'(s)=L-s>0\) in the interior of the second edge. Its minimum
	is \(c_x>0\), and \(u_x=\upsilon_{\alpha_x}\).
	
	If \(D_x<0\), its value at the negative vertex is
	\begin{equation}\label{eq:detailed-path-negative-value}
		u_x(x)=\frac{L(1+xI_+)-I_+x^2/2}{D_x}<0.
	\end{equation}
	The numerator equals \(L+I_+x(L-x/2)>0\). The solution is therefore
	not strictly positive; its sign elsewhere can depend on the strengths.
	
	If \(D_x=0\), then \(I_-=I_+/(1+xI_+)\), so
	\(L-I_-x^2/2>L-x/2>0\). Equation
	\eqref{eq:detailed-path-coefficient} is therefore inconsistent, and
	no solution exists. The positive zero eigenfunction is precisely
	\(\varphi_{0,x}\) from \eqref{eq:detailed-path-kernel}: its derivative
	jump is \(-I_+=-I_-\varphi_{0,x}(x)\), and both endpoint conditions
	hold. This gives a concrete instance of the zero-ground-state
	obstruction discussed in the preliminaries.
	
	\subsection{The four aggregate regimes}
	
	We now compare the exact sign criterion \eqref{eq:aggregate-path-sign}
	with the aggregate threshold \eqref{eq:aggregate-threshold}.
	
	\paragraph{Case 1: Non-positive total strength.}
	
	Suppose that \(I_+\le I_-\). Then \(I_->0\) and \(D_x<0\)
	for every \(0<x<L\). Every member of the path family therefore
	has a negative first eigenvalue and a unique solution which is
	negative at \(\mathrm v_x\). More generally, the necessary condition
	\(I_+>I_-\), established after Proposition~\ref{prop:metric-matrix},
	excludes a strictly positive torsion function on every graph with
	these aggregate data.
	
	For example, take \(L=1\), \(I_+=I_-=2\), and \(x=1/2\).
	Substitution in \eqref{eq:detailed-path-solution} gives
	\[
	D_{1/2}=-2,\qquad c_{1/2}=-\frac38,\qquad
	u_{1/2}(1/2)=-\frac78,\qquad u_{1/2}(1)=-\frac34.
	\]
	The derivative on the first edge is \(-s-3/4<0\), while on the
	second edge it is \(1-s>0\). Thus the solution decreases to the
	middle vertex and then increases, and its global maximum is
	\(u_{1/2}(0)=-3/8<0\). At the middle vertex the derivative jump is
	\(7/4=-2u_{1/2}(1/2)\), as required. The equation is solvable,
	although its unique solution is strictly negative throughout.
	
	\paragraph{Case 2: Below the aggregate threshold.}
	
	Suppose that \(I_-<I_+/(1+LI_+)\). For every \(0<x<L\),
	\[
	D_x=(I_+-I_--LI_+I_-)+(L-x)I_+I_->0.
	\]
	Hence every placement in this family has positive torsion.
	The aggregate comparison already guarantees the same conclusion
	for every connected compact graph with these data. The following
	elementary estimate also explains what happens at the threshold.
	
	For \(u\in H^1(\G)\), put \(m=\min_\G|u|\) and
	\(M=\max_\G|u|\). A simple path joining points attaining these
	extrema has length at most \(L\), so Cauchy--Schwarz gives
	\(\int_\G|u'|^2\ge(M-m)^2/L\); when \(m=M\), this is immediate.
	Bounding the vertex terms by \(I_+m^2-I_-M^2\) and completing the
	square yields
	\begin{align}
		h_\alpha[u]
		&\ge\frac{(M-m)^2}{L}+I_+m^2-I_-M^2\notag\\
		&=\frac{\bigl((1+LI_+)m-M\bigr)^2}{L(1+LI_+)}
		+\left(\frac{I_+}{1+LI_+}-I_-\right)M^2.
		\label{eq:detailed-aggregate-energy}
	\end{align}
	Since \(\|u\|_{L^2(\G)}^2\le LM^2\), the Rayleigh principle gives
	\[
	\lambda_1(\G,\alpha)
	\ge\frac1L\left(\frac{I_+}{1+LI_+}-I_-\right)>0.
	\]
	In particular, a zero first eigenvalue is impossible in this regime.
	
	For a numerical example, choose \(L=1\), \(I_+=2\),
	\(I_-=1/2\), and \(x=1/2\). The threshold is \(2/3\), and
	\eqref{eq:detailed-path-solution} gives
	\[
	D_{1/2}=1,\qquad c_{1/2}=\frac{15}{16},\qquad
	\upsilon_{\alpha_{1/2}}(1/2)=\frac74,\qquad
	\upsilon_{\alpha_{1/2}}(1)=\frac{15}{8}.
	\]
	Its derivative on the first edge is \(15/8-s>0\), and on the
	second it is \(1-s>0\) except at the terminal endpoint. Its minimum
	is therefore \(15/16\). The derivative jump is
	\(-7/8=-(1/2)\upsilon_{\alpha_{1/2}}(1/2)\).
	
	\paragraph{Case 3: At the aggregate threshold.}
	
	Suppose that \(I_-=I_+/(1+LI_+)\). Now
	\eqref{eq:detailed-aggregate-energy} shows that the full form is
	nonnegative on every connected compact graph with these aggregate
	data, so every such graph has \(\lambda_1\ge0\).
	For the interior negative placement \(\alpha_x\), however,
	\(
	D_x=(L-x)I_+I_->0, (0<x<L),
	\)
	and the torsion function remains strictly positive.
	
	To obtain a zero first eigenvalue on the same three-vertex, two-edge
	graph, exchange the negative and zero strengths, setting
	\(
	\widehat\alpha_x=(I_+,0,-I_-)^{\mathsf T}.
	\)
	The interior vertex now has matching one-sided derivatives, and the
	endpoint condition at \(L\) is \(u'(L)=I_-u(L)\). The function
	\(
	\widehat\varphi_0(s)=1+I_+s
	\)
	satisfies these conditions, because
	\(I_-=I_+/(1+LI_+)\), and is a positive zero eigenfunction.
	Nonnegativity of the form implies
	\(\lambda_1(\G_x,\widehat\alpha_x)=0\).
	Testing a hypothetical torsion solution against it would give
	\[
	0=h_{\widehat\alpha_x}[u,\widehat\varphi_0]
	=\int_0^L\widehat\varphi_0(s)\,ds
	=L+\frac{I_+L^2}{2}>0.
	\]
	Thus this endpoint placement admits no solution.
	
	For instance, fix \(L=1\), \(x=1/2\), \(I_+=2\), and
	\(I_-=2/3\). With the interior placement, formula
	\eqref{eq:detailed-path-solution} gives
	\[
	D_{1/2}=\frac23,\qquad c_{1/2}=\frac{11}{8},\qquad
	\upsilon_{\alpha_{1/2}}(1/2)=\frac{21}{8},\qquad
	\upsilon_{\alpha_{1/2}}(1)=\frac{11}{4}.
	\]
	The first-edge derivative is \(11/4-s>0\), and the second-edge
	derivative is \(1-s\). The minimum is \(11/8>0\), and the jump
	\(-7/4\) equals \(-(2/3)(21/8)\). With the endpoint placement,
	the positive zero eigenfunction is \(1+2s\), whose integral is
	\(2\), and the torsion equation is unsolvable. These two assignments
	have the same individual strengths and the same metric graph;
	only their vertex locations differ.
	
	\paragraph{Case 4: Above the aggregate threshold with positive total strength.}
	
	Finally, suppose that \(I_+/(1+LI_+)<I_-<I_+\), and define
	\begin{equation}\label{eq:path-critical-position}
		x_*:=\frac{I_+-I_-}{I_+I_-}.
	\end{equation}
	The assumptions give \(0<x_*<L\), and
	\(D_x=I_+I_-(x_*-x)\). By \eqref{eq:aggregate-path-sign} and
	the preceding solution calculation,
	\[
	\begin{array}{c|c|l}
		\text{Position}&\lambda_1(\G_x,\alpha_x)&\text{Torsion equation}\\
		\hline
		0<x<x_*&>0&\text{unique strictly positive solution}\\
		x=x_*&=0&\text{no solution}\\
		x_*<x<L&<0&\text{unique solution, negative at }\mathrm v_x
	\end{array}
	\]
	
	Take \(L=1\), \(I_+=2\), and \(I_-=1\). Then \(x_*=1/2\).
	For \(x=1/4\), substitution in
	\eqref{eq:detailed-path-solution} gives \(D_x=1/2\) and
	\(c_x=31/16\). The solution increases on both edges, with
	\[
	u_{1/4}(0)=\frac{31}{16},\qquad
	u_{1/4}(1/4)=\frac{23}{8},\qquad
	u_{1/4}(1)=\frac{101}{32}.
	\]
	It is the strictly positive torsion function.
	
	For \(x=1/2\), the zero eigenfunction from
	\eqref{eq:detailed-path-kernel} is
	\(\varphi_{0,1/2}(s)=1+2\min\{s,1/2\}\), with integral
	\(7/4\). Testing the torsion equation against this function would
	give \(0=7/4\), so there is no solution.
	
	For \(x=3/4\), formula \eqref{eq:detailed-path-solution} gives
	\(D_x=-1/2\), \(c_x=-23/16\), and
	\[
	u_{3/4}(0)=-\frac{23}{16},\qquad
	u_{3/4}(3/4)=-\frac{31}{8},\qquad
	u_{3/4}(1)=-\frac{123}{32}.
	\]
	The derivative on the first edge is \(-s-23/8<0\), and on the
	second it is \(1-s>0\). Thus the global maximum is
	\(-23/16<0\), and the unique solution is strictly negative.
	All three spectral signs have therefore occurred with the same total
	length and the same ordered strength vector.
	
	The four cases show that positive total strength is necessary for
	positive torsion, but does not guarantee it. Below the aggregate
	threshold, every configuration admits a strictly positive torsion
	function. At the threshold, the placement of the same strengths can
	produce either positive torsion or a zero first eigenvalue. Above the
	threshold, while the total strength remains positive, all three
	spectral signs are possible with the same strength values and total
	length. Thus the aggregate condition provides a uniform guarantee,
	but is not necessary for positivity on an individual graph. The examples
	also distinguish positivity from solvability: a zero first eigenvalue
	excludes every solution of the torsion equation, whereas a negative
	first eigenvalue may still allow a solution that is not strictly positive.
	
	\section{Sharp universal bounds from aggregate strengths}
	\label{sec:universal}
	
	We now determine when the aggregate data yield a finite upper bound
	over all graphs with strictly positive torsion. We first compute the
	value of the comparison interval introduced in \eqref{eq:model-form}.
	
	\begin{example}\label{ex:model-interval}
		Suppose that the strict inequality in \eqref{eq:aggregate-threshold}
		holds. The torsion function on \(\mathcal J_L\) satisfies
		\[
		-\upsilon_{\mathcal J}''=1,\qquad
		\upsilon_{\mathcal J}'(0)=I_+\upsilon_{\mathcal J}(0),\qquad
		\upsilon_{\mathcal J}'(L)=I_-\upsilon_{\mathcal J}(L).
		\]
		Thus
		\[
		\upsilon_{\mathcal J}(x)=-\frac{x^2}{2}+I_+c_0x+c_0,
		\qquad
		c_0=\frac{L\left(1-\frac{LI_-}{2}\right)}
		{I_+-I_--LI_+I_-}.
		\]
		Since \(\upsilon_{\mathcal J}>0\), the endpoint condition gives
		\[
		\upsilon_{\mathcal J}'(x)
		=I_-\upsilon_{\mathcal J}(L)+L-x>0
		\qquad(0\le x<L).
		\]
		Thus its minimum and maximum occur at the two endpoints, including
		when \(I_-=0\).
		Integrating the torsion function gives
		\begin{equation}\label{eq:model-T-explicit}
			\Tor(\mathcal J_L;I_+,-I_-)
			=-\frac{L^3}{6}
			+\frac{L^2
				\left(1-\frac{LI_-}{2}\right)
				\left(1+\frac{LI_+}{2}\right)}
			{I_+-I_--LI_+I_-}.
		\end{equation}
		When \(I_-=0\), this reduces to \(L^3/3+L^2/I_+\), the model
		value in the non-negative Saint--Venant comparison of
		\cite{OzcanTaufer2026}.
	\end{example}
	
	\begin{theorem}\label{thm:universal}
		Fix \(L>0\) and \(I_+>I_-\ge0\), and let
		\(\mathfrak C(L,I_+,I_-)\) denote the class of pairs \((\G,\alpha)\)
		such that \(\G\) is a finite compact connected metric graph,
		\(|\G|=L\), \(M_\alpha\succ0\), and
		\(I_+(\alpha)=I_+\), \(I_-(\alpha)=I_-\). Then
		\begin{equation}\label{eq:universal-finiteness}
			\sup_{(\G,\alpha)\in\mathfrak C(L,I_+,I_-)}
			\Tor(\G,\alpha)<\infty
			\quad\Longleftrightarrow\quad
			I_-<\frac{I_+}{1+LI_+}.
		\end{equation}
		When these equivalent conditions hold,
		\begin{equation}\label{eq:signed-SV}
			\Tor(\G,\alpha)
			\le\Tor(\mathcal J_L;I_+,-I_-)
			\qquad
			\text{for every }(\G,\alpha)\in\mathfrak C(L,I_+,I_-),
		\end{equation}
		where the model value is given by \eqref{eq:model-T-explicit}.
		The model interval attains the supremum. Up to insertion or removal
		of degree-two vertices of zero strength, equality holds only for
		an interval with all positive strength at the minimum endpoint of
		its torsion function and all negative strength at its maximum
		endpoint.
		
		If the condition in \eqref{eq:universal-finiteness} fails, there
		is a sequence of three-vertex paths
		\((\G_n,\alpha_n)\in\mathfrak C(L,I_+,I_-)\) such that
		\(\Tor(\G_n,\alpha_n)\to+\infty\).
	\end{theorem}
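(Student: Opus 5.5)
The proof has two parts: the sharp bound below the threshold, and a blow-up family at and beyond it.

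\emph{Sharp bound.} Assume \(I_-<I_+/(1+LI_+)\). I would use the variational characterization of Theorem~\ref{thm:Polya-signed} and transplant the torsion function onto the model interval by decreasing rearrangement, in the spirit of \cite{OzcanTaufer2026}. Let \(\upsilon=\upsilon_\alpha>0\), with \(m=\min_\G\upsilon\) and \(M=\max_\G\upsilon\). Let \(f\) on \([0,L]\) be the increasing rearrangement of \(\upsilon\), so that \(|\{f>t\}|=|\{\upsilon>t\}|\). Then \(\int_0^L f=\int_\G\upsilon\), \(f(0)=m\) and \(f(L)=M\). The P\'olya--Szeg\H{o} inequality on metric graphs gives \(\int_0^L|f'|^2\le\int_\G|\upsilon'|^2\), because every level \(t\in(m,M)\) is taken at least once by \(\upsilon\), as \(\G\) is connected. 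For the vertex terms, \(\sum_{\mathrm v}\alpha_{\mathrm v}\upsilon(\mathrm v)^2\ge I_+m^2-I_-M^2\). Hence \(h_{\mathcal J_L,I_+,I_-}[f]\le h_\alpha[\upsilon]\).

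Both sides are positive: the right side equals \(\int\upsilon\), and the left side is positive since \(M_{\mathcal J}\succ0\) by \eqref{eq:aggregate-threshold} and Proposition~\ref{prop:metric-matrix}. Applying \eqref{eq:Polya-signed} on \(\mathcal J_L\) gives \(\Tor(\mathcal J_L)\ge(\int f)^2/h_{\mathcal J}[f]\ge(\int\upsilon)^2/h_\alpha[\upsilon]=\Tor(\G,\alpha)\). Example~\ref{ex:model-interval} shows the model interval lies in the class and attains the bound, so it attains the supremum. The forward implication of \eqref{eq:universal-finiteness} follows.

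\emph{Equality case.} If equality holds, every inequality above is an equality. First, \(f\) is a multiple of \(\upsilon_{\mathcal J}\), by the equality case of Theorem~\ref{thm:Polya-signed}. Second, equality in the vertex terms forces all positive strength to sit at points where \(\upsilon=m\) and all negative strength at points where \(\upsilon=M\). Third, equality in P\'olya--Szeg\H{o} forces every level to be attained exactly once; otherwise the coarea argument gives strict inequality, since \(|f'|\) then involves \(\sum 1/|\upsilon'|\) over at least two preimages. A connected graph on which a function with no critical level sets takes each value once must be a path, hence an interval up to degree-two vertices. Since \(f\) is strictly increasing, \(m\) and \(M\) are attained only at the two ends. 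This yields the stated characterization.

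\emph{Blow-up at and above the threshold.} Assume \(I_-\ge I_+/(1+LI_+)\), with \(I_+>I_-\). I would use the paths \(\G_x\) of Section~\ref{sec:detailed-example} with \(D_x\downarrow0^+\).
\begin{itemize}
\item Above the threshold, take \(x\uparrow x_*\), where \(0<x_*<L\).
\item At the threshold, note that \(D_x=(L-x)I_+I_-\), and take \(x\uparrow L\). Here the edge of length \(L-x\) shrinks but stays positive, so \(\G_x\) remains a three-vertex path.
\end{itemize}
In both cases \(D_x>0\), so the pair lies in \(\mathfrak C(L,I_+,I_-)\). By \eqref{eq:detailed-path-solution}, \(\upsilon\ge c_x=(L-I_-x^2/2)/D_x\). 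The numerator exceeds \(L-x/2\ge L/2\), using \(xI_-<1\), so \(\Tor\ge Lc_x\to\infty\). This gives the converse of \eqref{eq:universal-finiteness}.

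The main obstacle is the rigidity step in the equality case, namely justifying strict P\'olya--Szeg\H{o} when some level has two preimages. I would handle it with the coarea formula and the fact that \(\upsilon\) is piecewise quadratic, so its critical set is finite.
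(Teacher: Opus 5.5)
Your proposal is correct and follows the paper's proof essentially step by step. It uses the nondecreasing rearrangement onto $\mathcal J_L$ with the coarea gradient bound (using only that every level is attained at least once), the vertex estimate $I_+m^2-I_-M^2$, the P\'olya quotient comparison, an equality case via injectivity of $\upsilon_\alpha$, and the paths $\G_x$ with $D_x\downarrow0^+$; your split into the at-threshold and above-threshold cases is the paper's single choice $x_*\in(0,L]$, and your bound $\Tor\ge Lc_x$ is a slightly stronger version of its $\tfrac{x}{2}c_x$. The one step you leave compressed is the passage from ``$n(t)=1$ for almost every $t$'' to injectivity, which the paper closes as follows: two points with equal values, joined by a simple path on which $\upsilon_\alpha$ is nonconstant, force two preimages for a whole interval of levels by the intermediate value theorem.
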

	
	\begin{proof}
		Assume first that the strict aggregate threshold holds.
		By \eqref{eq:aggregate-threshold} and
		Proposition~\ref{prop:metric-matrix}, the comparison interval has
		strictly positive first eigenvalue. Take
		\((\G,\alpha)\in\mathfrak C(L,I_+,I_-)\), and let
		\(\upsilon_\alpha\) be its strictly positive torsion function.
		Write
		\(
		m:=\min_\G\upsilon_\alpha,
		 M:=\max_\G\upsilon_\alpha.
		\)
		The edgewise equation makes this function nonconstant, so
		\(0<m<M\). Moving the signed vertex contributions to these
		extremal levels gives
		\begin{align}
			\sum_{\mathrm v\in\V}
			\alpha_{\mathrm v}\upsilon_\alpha(\mathrm v)^2
			&=\sum_{\substack{\mathrm v\in\V\\\alpha_{\mathrm v}>0}}
			\alpha_{\mathrm v}\upsilon_\alpha(\mathrm v)^2
			-\sum_{\substack{\mathrm v\in\V\\\alpha_{\mathrm v}<0}}
			|\alpha_{\mathrm v}|\upsilon_\alpha(\mathrm v)^2
			\notag\\
			&\ge I_+m^2-I_-M^2.
			\label{eq:vertex-rearrange}
		\end{align}
		Let \(\upsilon_\alpha^*\) be the nondecreasing equimeasurable
		rearrangement on \([0,L]\), and define
		\[
		\eta(t):=|\{x\in\G:\upsilon_\alpha(x)<t\}|,
		\qquad
		n(t):=\#\{x\in\G:\upsilon_\alpha(x)=t\}.
		\]
		For regular values \(t\in(m,M)\), excluding the finitely many vertex and
		stationary values, connectedness gives \(n(t)\ge1\) and
		\(\eta'(t)=\sum_{\upsilon_\alpha(x)=t}|\upsilon_\alpha'(x)|^{-1}
		\in(0,\infty)\). The coarea and Cauchy--Schwarz calculation in
		\cite[proof of Theorem~VIII.2,
		equations~(VIII.2)--(VIII.4)]{OzcanTaufer2026}
		depends only on connectedness and the edgewise equation,
		so it applies to the present signed strengths. It gives
		\(\upsilon_\alpha^*\in H^1(0,L)\),
		\begin{equation}\label{eq:L1-rearrangement}
			\upsilon_\alpha^*(0)=m,\qquad
			\upsilon_\alpha^*(L)=M,\qquad
			\int_0^L\upsilon_\alpha^*\,dx
			=\int_\G\upsilon_\alpha\,dx,
		\end{equation}
		and
		\begin{equation}\label{eq:gradient-rearrangement}
			\int_\G|\upsilon_\alpha'|^2\,dx
			\ge\int_m^M\frac{n(t)^2}{\eta'(t)}\,dt
			\ge\int_m^M\frac{dt}{\eta'(t)}
			=\int_0^L|(\upsilon_\alpha^*)'|^2\,dx.
		\end{equation}
		Equality between the graph and rearranged derivative energies
		holds exactly when \(n(t)=1\) for almost every regular value;
		in that case the Cauchy--Schwarz step is automatically an equality.
		
		Combining the vertex and derivative estimates yields
		\begin{equation}\label{eq:form-rearrangement}
			h_\alpha[\upsilon_\alpha]
			\ge h_{\mathcal J_L,I_+,I_-}[\upsilon_\alpha^*]>0.
		\end{equation}
		The weak torsion identity gives
		\(h_\alpha[\upsilon_\alpha]=\Tor(\G,\alpha)\).
		Hence \eqref{eq:L1-rearrangement} and the variational principle
		\eqref{eq:Polya-signed} give
		\begin{align*}
			\Tor(\G,\alpha)
			=\frac{\left(\int_\G\upsilon_\alpha\,dx\right)^2}
			{h_\alpha[\upsilon_\alpha]}
			\le
			\frac{\left(\int_0^L\upsilon_\alpha^*\,dx\right)^2}
			{h_{\mathcal J_L,I_+,I_-}[\upsilon_\alpha^*]}
			\le\Tor(\mathcal J_L;I_+,-I_-).
		\end{align*}
		The model interval belongs to the class and attains this bound.
		
	Suppose now that equality holds in \eqref{eq:signed-SV}.
	Equality throughout the quotient comparison above implies
	\(
	h_\alpha[\upsilon_\alpha]
	=h_{\mathcal J_L,I_+,I_-}[\upsilon_\alpha^*].
	\)
	Since the differences in the derivative and vertex estimates are
	both nonnegative, equality holds separately in
	\eqref{eq:gradient-rearrangement} and \eqref{eq:vertex-rearrange}.
	In particular, \(n(t)=1\) for almost every regular value
	\(t\in(m,M)\).
	We claim that \(\upsilon_\alpha\) is injective on \(\G\).
	Otherwise, choose distinct points \(x,y\in\G\) such that
	\(\upsilon_\alpha(x)=\upsilon_\alpha(y)\), and join them by a
	simple path. Since \(-\upsilon_{\alpha,\mathrm e}''=1\) on each
	edge, the restriction of \(\upsilon_\alpha\) to this path cannot
	be constant. Hence there is an interior point \(z\) of the path
	with \(\upsilon_\alpha(z)\ne\upsilon_\alpha(x)\).
	By continuity, every value strictly between
	\(\upsilon_\alpha(x)\) and \(\upsilon_\alpha(z)\) is attained
	on both subpaths joining \(x\) to \(z\) and \(z\) to \(y\).
	This gives two distinct preimages for every regular value in
	a nonempty interval, contradicting \(n(t)=1\) almost everywhere.
	
	Since \(\G\) is compact and connected, the continuous injective
	map \(\upsilon_\alpha\) is a homeomorphism from \(\G\) onto
	\([m,M]\). Thus \(\G\) is a path of total length \(L\), possibly
	subdivided, and its torsion function is strictly monotone along
	the path. Its minimum and maximum are therefore attained uniquely
	at the two endpoints.
	Finally, equality in \eqref{eq:vertex-rearrange} is equivalent to
	\[
	\sum_{\substack{\mathrm v\in\V\\\alpha_{\mathrm v}>0}}
	\alpha_{\mathrm v}
	\bigl(\upsilon_\alpha(\mathrm v)^2-m^2\bigr)
	+
	\sum_{\substack{\mathrm v\in\V\\\alpha_{\mathrm v}<0}}
	|\alpha_{\mathrm v}|
	\bigl(M^2-\upsilon_\alpha(\mathrm v)^2\bigr)
	=0.
	\]
	Every summand is nonnegative. Consequently, every vertex with
	positive strength is a minimum point, and every vertex with
	negative strength is a maximum point. The endpoint strengths
	are therefore \(I_+\) and \(-I_-\), while all interior vertices
	have zero strength. Removing these degree-two vertices gives
	precisely the model interval. Conversely, inserting degree-two
	vertices of zero strength into the model interval preserves
	equality.
	
		Finally, suppose that the strict threshold fails. Since
		\(I_+>I_-\), necessarily \(I_->0\). Use the critical position
		\(x_*\) defined by \eqref{eq:path-critical-position}; in the
		present range \(0<x_*\le L\), with \(x_*=L\) at the aggregate
		threshold. For \(0<x<x_*\), take the path and strength assignment
		\((\G_x,\alpha_x)\) of Section~\ref{sec:detailed-example}.
		Equation~\eqref{eq:detailed-path-D} gives \(D_x>0\), so
		\eqref{eq:aggregate-path-sign} and
		Proposition~\ref{prop:metric-matrix} show that
		\((\G_x,\alpha_x)\in\mathfrak C(L,I_+,I_-)\).
		
		By \eqref{eq:detailed-path-solution}, its torsion vertex value at
		\(\mathrm v_0\) is \(g_{\alpha_x}(\mathrm v_0)=c_x\).
		As \(x\uparrow x_*\), its denominator \(D_x\) tends to zero
		from above. Moreover, \(I_-x_*=(I_+-I_-)/I_+<1\), and hence
		\[
		L-\frac{I_-x_*^2}{2}>L-\frac{x_*}{2}\ge\frac L2>0.
		\]
		Thus \(c_x\to+\infty\). All torsion vertex values are positive,
		so \eqref{eq:path-d} and \eqref{eq:T-vertex-trace} give
		\(
		\Tor(\G_x,\alpha_x)\ge\frac{x}{2}c_x.
		\)
		Choose
		\[
		x_n:=x_*\left(1-\frac1{n+1}\right),
		\qquad (\G_n,\alpha_n):=(\G_{x_n},\alpha_{x_n}),
		\qquad n\ge1.
		\]
		Each path has two edges of strictly positive length, also when
		\(x_*=L\). Since \(x_n\to x_*\), the preceding lower bound
		shows that \(\Tor(\G_n,\alpha_n)\to+\infty\).
		This proves the remaining implication in
		\eqref{eq:universal-finiteness}.
	\end{proof}
	\section{The sharp infimum for fixed aggregate strengths}
	\label{sec:sharp-infimum}
	
	Theorem~\ref{thm:universal} determines the supremum of torsional
	rigidity over \(\mathfrak C(L,I_+,I_-)\). We now determine the
	corresponding infimum. The argument extends the lower estimate in
	\cite[proof of Theorem~VIII.1]{OzcanTaufer2026} from non-negative
	strengths to the regime \(M_\alpha\succ0\).
	
	\begin{theorem}\label{thm:sharp-infimum}
		Fix \(L>0\) and \(I_+>I_-\ge0\), and let
		\(\mathfrak C(L,I_+,I_-)\) be the class defined in
		Theorem~\ref{thm:universal}. Every
		\((\G,\alpha)\in\mathfrak C(L,I_+,I_-)\) satisfies
		\begin{equation}\label{eq:aggregate-lower-bound}
			\Tor(\G,\alpha)
			\ge \frac1{12}\sum_{\mathrm e\in\E}\ell_{\mathrm e}^3
			+\frac{L^2}{I_+-I_-}
			>\frac{L^2}{I_+-I_-}.
		\end{equation}
		Moreover,
		\begin{equation}\label{eq:sharp-infimum}
			\inf_{(\G,\alpha)\in\mathfrak C(L,I_+,I_-)}
			\Tor(\G,\alpha)
			=\frac{L^2}{I_+-I_-}.
		\end{equation}
		The infimum is not attained.
	\end{theorem}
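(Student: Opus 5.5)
The plan is to work with the vertex formula of Proposition~\ref{prop:T-matrix}. By \eqref{eq:T-matrix} it suffices to show
\[
\tfrac14\ip{d}{M_\alpha^{-1}d}\ge\frac{L^2}{I_+-I_-}.
\]
Since \(M_\alpha\succ0\), we have the dual variational identity
\[
\ip{d}{M_\alpha^{-1}d}=\max_{g\ne0}\frac{\ip{d}{g}^2}{\ip{M_\alpha g}{g}}.
\]
Testing with \(g=\mathbf1\) gives \(\ip{d}{\mathbf1}=\sum_{\mathrm v}d_{\mathrm v}=2L\), because each edge contributes its length at both endpoints, loops included. It also gives \(\ip{M_\alpha\mathbf1}{\mathbf1}=\ip{K_\ell\mathbf1}{\mathbf1}+\sum_{\mathrm v}\alpha_{\mathrm v}=I_+-I_-\), which is positive. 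Hence \(\tfrac14\ip{d}{M_\alpha^{-1}d}\ge\tfrac14\cdot 4L^2/(I_+-I_-)\), and this is \eqref{eq:aggregate-lower-bound}. The final strict inequality holds because \(\sum_{\mathrm e}\ell_{\mathrm e}^3>0\) whenever there is at least one edge. Alternatively, one could test Theorem~\ref{thm:Polya-signed} with \(u\equiv1\) and the edge bubbles, but the matrix route is cleaner.

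Nonattainment follows immediately. Every element of the class satisfies the strict inequality \(\Tor>\frac{L^2}{I_+-I_-}\), so no element can equal the infimum once \eqref{eq:sharp-infimum} is proved.

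For sharpness, take two vertices \(\mathrm v_+,\mathrm v_-\) with strengths \(I_+\) and \(-I_-\), joined by \(N\) parallel edges of length \(L/N\). Then \(\sum_{\mathrm e}\ell_{\mathrm e}^3=L^3/N^2\to0\), and \(d=(L,L)^{\mathsf T}\). The Laplacian part is \(K_\ell=\tfrac{N^2}{L}\bigl(\begin{smallmatrix}1&-1\\-1&1\end{smallmatrix}\bigr)\), so \(M_\alpha=K_\ell+\operatorname{diag}(I_+,-I_-)\). Its determinant is
\[
\det M_\alpha=\frac{N^2}{L}(I_+-I_-)-I_+I_-,
\]
which is positive for large \(N\), and its first diagonal entry is positive. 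By Sylvester's criterion \(M_\alpha\succ0\), so these graphs lie in \(\mathfrak C(L,I_+,I_-)\) for \(N\) large. They are finite compact connected graphs of total length \(L\) with the prescribed totals.

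Next I compute \(\ip{d}{M_\alpha^{-1}d}\) explicitly. For \(M=\bigl(\begin{smallmatrix}a&b\\b&c\end{smallmatrix}\bigr)\) one has \(\ip{d}{M^{-1}d}=L^2(a+c-2b)/\det M\). Here
\[
a+c-2b=\frac{4N^2}{L}+I_+-I_-.
\]
Hence
\[
\tfrac14\ip{d}{M_\alpha^{-1}d}
=\frac{L^2\bigl(N^2/L+(I_+-I_-)/4\bigr)}{N^2(I_+-I_-)/L-I_+I_-}
\longrightarrow\frac{L^2}{I_+-I_-}
\qquad (N\to\infty).
\]
Together with the vanishing cubic term, \(\Tor\to L^2/(I_+-I_-)\), which gives \eqref{eq:sharp-infimum}.

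The main point needing care is the dual Cauchy--Schwarz identity for \(\ip{d}{M_\alpha^{-1}d}\) when \(M_\alpha\) has negative diagonal contributions. This holds for any positive definite matrix: write \(\ip{d}{g}^2=\ip{M^{-1/2}d}{M^{1/2}g}^2\) and apply Cauchy--Schwarz. The remaining steps are routine: the bookkeeping \(\sum_{\mathrm v}d_{\mathrm v}=2L\) for multigraphs, and positivity of the determinant for large \(N\).
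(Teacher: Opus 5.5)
Your proposal is correct and follows essentially the same route as the paper's proof. The paper applies Cauchy--Schwarz in the $M_\alpha$-inner product to $d$ and $\mathbf 1$, which is exactly your dual variational identity evaluated at $g=\mathbf 1$. It gets nonattainment from the strictly positive cubic term, and uses the same two-vertex graph with $n$ parallel edges of length $L/n$, arriving at the same closed form for $\Tor(\G_n,\alpha_n)$ that you computed.
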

	
	\begin{proof}
		Let \((\G,\alpha)\in\mathfrak C(L,I_+,I_-)\). Since
		\(K_\ell\mathbf1=0\), we have
		\[
		\ip{M_\alpha\mathbf1}{\mathbf1}
		=\sum_{\mathrm v\in\V}\alpha_{\mathrm v}=I_+-I_-,
		\qquad
		\ip{d}{\mathbf1}=2L.
		\]
		The second identity counts both endpoint incidences of each edge.
		Cauchy--Schwarz in the inner product induced by \(M_\alpha\) gives
		\[
		4L^2=\ip{d}{\mathbf1}^2
		\le \ip{d}{M_\alpha^{-1}d}
		\ip{M_\alpha\mathbf1}{\mathbf1}
		=(I_+-I_-)\ip{d}{M_\alpha^{-1}d}.
		\]
		Substitution into \eqref{eq:T-matrix} proves the first inequality
		in \eqref{eq:aggregate-lower-bound}. Since every edge has strictly
		positive length and \(L>0\), the cubic sum is strictly positive.
		This proves the second inequality and excludes attainment of the
		value in \eqref{eq:sharp-infimum}.
		
		To prove sharpness, for each integer \(n\ge1\), let \(\G_n\)
		consist of two vertices \(\mathrm v_+\) and \(\mathrm v_-\)
		joined by \(n\) parallel edges, each of length \(L/n\).
		In the vertex order \((\mathrm v_+,\mathrm v_-)\), assign
		\(
		\alpha_n=(I_+,-I_-)^{\mathsf T}.
		\)
		The total length is \(L\), and the aggregate strengths are
		\(I_+\) and \(I_-\) for every \(n\). Each edge has conductance
		\(n/L\), so
		\[
		M_{\alpha_n}=
		\begin{pmatrix}
			\dfrac{n^2}{L}+I_+ & -\dfrac{n^2}{L}\\[4pt]
			-\dfrac{n^2}{L} & \dfrac{n^2}{L}-I_-
		\end{pmatrix},
		\qquad
		d_n=(L,L)^{\mathsf T}.
		\]
		Its determinant is
		\[
		\det M_{\alpha_n}
		=\frac{n^2(I_+-I_-)-LI_+I_-}{L}.
		\]
		Since the first diagonal entry is positive, \(M_{\alpha_n}\succ0\)
		precisely when
\(
			n^2(I_+-I_-)>LI_+I_-.
\)
		This holds for all sufficiently large \(n\), and hence
		\((\G_n,\alpha_n)\in\mathfrak C(L,I_+,I_-)\) for those \(n\).
		Applying \eqref{eq:T-matrix} and inverting the two-by-two matrix
		gives
		\begin{equation}\label{eq:parallel-edge-torsion}
			\Tor(\G_n,\alpha_n)
			=\frac{L^3}{12n^2}
			+\frac{L^2\bigl(4n^2+L(I_+-I_-)\bigr)}
			{4\bigl(n^2(I_+-I_-)-LI_+I_-\bigr)}.
		\end{equation}
		Therefore
		\[
		\lim_{n\to\infty}\Tor(\G_n,\alpha_n)
		=\frac{L^2}{I_+-I_-}.
		\]
	 Together with \eqref{eq:aggregate-lower-bound}, this proves
		\eqref{eq:sharp-infimum}. Moreover, for every
		$(\G,\alpha)\in\mathfrak C(L,I_+,I_-)$,
		\[
		\Tor(\G,\alpha)-\frac{L^2}{I_+-I_-}
		\ge \frac1{12}\sum_{\mathrm e\in\E}\ell_{\mathrm e}^3>0.
		\]
		Thus no pair in $\mathfrak C(L,I_+,I_-)$ attains the infimum.
	\end{proof}

\end{document}